\newtheorem{theorem}{Theorem}[section]
\newtheorem{proposition}[theorem]{Proposition}
\newtheorem{lemma}[theorem]{Lemma}
\newtheorem{remark}[theorem]{Remark}
\newtheorem{example}[theorem]{Example}
\renewcommand{\hom}{\textrm{Hom}}
\newcommand{\wt}{\widetilde}
\newcommand{\fix}{\textrm{Fix}}
\begin{document}
\baselineskip=15.5pt
\title[Equivariant cobordism of generalized Dold manifolds]{A note on the equivariant cobordism of generalized Dold manifolds} 
\author[A. Nath]{Avijit Nath} 

\address{Indian Institute of Science Education and Research, Tirupati\\
Rami Reddy
Nagar, Karakambadi Road, Mangalam (P.O.), Tirupati 517507}

\email{avijitnath@imsc.res.in}

\author[P. Sankaran]{Parameswaran Sankaran
}
\address{Chennai Mathematical Institute, H1 SIPCOT IT Park, Siruseri\\ Kelambakkam 603103}
\email{sankaran@cmi.ac.in}
\subjclass[2010]{57R25}
\keywords{Generalized Dold manifolds, elementary abelian $2$-group actions, equivariant cobordism, isotropy representations, flag manifolds}
\thispagestyle{empty}
\date{}
\thanks{Most of this work was done when both the authors were at the Institute of Mathemtaical Sciences, Chennai.  Both the authors were partially supported by a XII Plan Project, Department of Atomic Energy, Government of India.}

\begin{abstract}
Let $(X,J) $ be an almost complex manifold with a (smooth) involution $\sigma:X\to X$ such that $\fix(\sigma)\ne \emptyset$. Assume that 
$\sigma$ is a complex conjugation, i.e, the differential of $\sigma$ anti-commutes with $J$.  
The space $P(m,X):=\mathbb{S}^m\times X/\!\sim$ where $(v,x)\sim (-v,\sigma(x))$ is known as a generalized 
Dold manifold.  
Suppose that a group $G\cong \mathbb Z_2^s$ acts smoothly on $X$ such that $g\circ \sigma
=\sigma\circ g$ for all $g\in G$. Using the action of the diagonal subgroup $D=O(1)^{m+1}\subset O(m+1)$ on the sphere 
$\mathbb S^{m}$ for which there are only finitely many pairs of antipodal points that are stablized by $D$, 
we obtain an action of $\mathcal G=D\times G$ on $\mathbb S^m\times X$, which descends to 
a (smooth) action of $\mathcal G$ on $P(m,X)$.  
When the stationary point set $X^G$ for the $G$ action on $X$ is finite, 
the same also holds for the $\mathcal G$ action on $P(m,X)$. The main result of this note is that the equivariant 
cobordism class $[P(m,X),\mathcal G]$ vanishes if and only if $[X,G]$ vanishes. We illustrate this result in the case 
when $X$ is the complex flag manifold, $\sigma$ is the natural complex conjugation and $G\cong (\mathbb Z_2)^n$ is contained 
in the diagonal subgroup of $U(n)$.

\end{abstract}
\dedicatory{Dedicated to Professor Daciberg Lima Gon\c calves on the occasion of his 70th birthday.  }
\maketitle

\section{Introduction} \label{intro}

Recall that the classical Dold manifold $P(m,n)$ is defined as the orbit space of the $\mathbb Z/2\mathbb Z$-action on $\mathbb S^m\times \mathbb CP^n$ generated by the involution $(v,[z])\mapsto (-v,[\bar{z}]), v\in\mathbb S^m,[z]\in\mathbb CP^n$.
Here $[\bar z]$ denotes $[\bar z_0:\cdots :\bar z_n]$ when $[z]=[z_0:\cdots:z_n]\in \mathbb CP^n.$                
 See \cite{dold}. 

Let $\sigma:X\to X$ be a complex conjugation on an almost complex manifold $(X,J)$, that is, $\sigma$ is an involution such that, for any $x\in X,$ the differential 
$T_x\sigma:T_xX\to T_{\sigma(x)}X$ satisfies the equation $J_{\sigma(x)}\circ T_x\sigma=-T_{\sigma(x)}\sigma \circ J_x$. See \cite[\S 24]{cf}. 
We assume that $\textrm{Fix}(\sigma)\ne \emptyset$. 
The generalized Dold manifold $P(m,X)$ was introduced in \cite{ns} as the quotient of $\mathbb S^m\times X$ 
under the identification $(v,x)\sim (-v,\sigma(x))$.

We obtained a description of its tangent bundle, and, assuming that $H^1(X;\mathbb Z_2)=0$, a formula for the Stiefel-Whitney classes of $P(m,X)$.  
We obtained conditions for the (non) vanishing of the unoriented cobordism class $[P(m,X)]\in \mathfrak N_*$.

Let $G=(\mathbb Z_2)^n$ and consider the class of all smooth compact manifolds (without boundary) admitting 
(smooth) $G$-actions.  
Recall that two manifolds $M_0,M_1$ with $G$-actions 
are $G$-{\it equivariantly cobordant} if there exists a compact manifold-with-boundary $W$ 
admitting a smooth $G$ action on it such that the boundary $\partial W$, with the restricted $G$-action, is equivariantly diffeomorphic to 
the disjoint union $M_0\sqcup M_1$.   The $G$-equivariant 
cobordism class of $M$ with a given $G$-action $\phi$ is denoted $[M,\phi]$ or more briefly $[M,G]$. In this note shall only consider $G$-actions on $M$  with only finite stationary point set 
$M^G=\{x\in M\mid g.x=x~\forall g\in G\}$.   
The set of all $G$-equivariant cobordim classes of compact $G$-mainfolds with finite stationary point sets is a graded 
$\mathbb Z_2$-algebra, denoted $Z_*(G)$ in which 
addition corresponds to taking disjoint union and multiplication to the cartesian products  (with the obvious $G$-actions). One has the forgetful map 
$\epsilon: Z_*(G)\to \mathfrak{N}_*$ to the cobordism algebra sending $[M,\phi]$ to $[M]$.

We consider the action of a subgroup $D$ of $O(1)^{m+1}\subset O(m+1)$ on $\mathbb S^m$ and assume that there are at most 
finitely many pairs of antipodal points in $\mathbb S^m$ that are stable by the action of $D$.  (This is evidently equivalent 
to the requitement that $D$ act on $\mathbb RP^m$ with only finitely many stationary points.) 
For any $G$ action 
on $X$ that commutes with $\sigma$, we obtain an action of $\mathcal G:=D\times G$ on $P(m,X)$ induced by the 
action of $\mathcal G$ on $\mathbb S^m\times X$. If $X^G$ is finite, so is $P(m,X)^\mathcal G$.  

We now state the main result of this paper.   
 
\begin{theorem} \label{main}  Suppose that $G\cong \mathbb Z_2^q$ acts on an almost complex manifold $(X,J)$ 
such that $X^G$ is finite and that the isotropy representation at each $G$-fixed point is complex linear. Suppose that the action of 
$D\subset (O(1))^{m+1}\subset O(m+1)$  on $\mathbb RP^m$ has only finitely many stationary points.   Then $[X,G]= 0\iff [P(m,X),\mathcal G]=0$ where $\mathcal G=D\times G$.  
\end{theorem}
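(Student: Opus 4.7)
The central tool is the classical detection theorem for elementary abelian $2$-group actions (tom Dieck, Stong): if $H \cong (\mathbb{Z}_2)^k$ acts on a smooth compact manifold $M$ with only isolated fixed points, then $[M, H] = 0$ in $Z_*(H)$ if and only if for every isomorphism class of fixed-point-free real $H$-representation $\rho$, the count $\#\{x \in M^H : T_xM \cong \rho\}$ is even. I will apply this criterion to both $(X, G)$ and $(P(m, X), \mathcal{G})$, reducing the theorem to an equivalence of parities between the two fixed-point datasets.

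I begin by analyzing the $D$-fixed data on $\bs^m$. Let $\chi_k$ denote the $k$-th coordinate character of $(O(1))^{m+1}$. The hypothesis that $(\br P^m)^D$ is finite forces $\chi_1|_D, \ldots, \chi_{m+1}|_D$ to be pairwise distinct characters of $D$, whence $(\br P^m)^D = \{[e_1], \ldots, [e_{m+1}]\}$ with at most one $\chi_k|_D$ trivial. Write $I_0$ for that (possibly empty) singleton set of indices and $I_1$ for its complement. A direct calculation classifies the $\mathcal{G}$-fixed points on $P(m, X)$: each has a representative $[e_k, x]$ where either (Case A) $k \in I_0$ and $x \in X^G$, or (Case B) $k \in I_1$ and $x \in A := X^G \cap \fix(\sigma)$. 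The constraint $\sigma(x) = x$ in Case B arises since, for $d \in D$ with $\chi_k(d) = -1$, the fixed-point condition $(d, g)[e_k, x] = [e_k, x]$ forces $g x = \sigma(x)$ for all $g \in G$, which combined with $x \in X^G$ yields $\sigma(x) = x$.

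Next I compute the isotropy representations. In Case A the upstairs lift $(e_{k_0}, x)$ is itself $\mathcal{G}$-fixed and the isotropy representation is
\[
\bigoplus_{j \neq k_0} \chi_j|_D \boxtimes \mathbf{1}_G \ \oplus\ \mathbf{1}_D \boxtimes \rho_x,
\]
where $\rho_x$ is the complex-linear $G$-isotropy rep on $T_xX$. In Case B only the index-$2$ subgroup $K_k \times G$ (with $K_k = \ker \chi_k|_D$) fixes the lift $(e_k, x)$; elements outside must be composed with the deck transformation $\tau(v, y) = (-v, \sigma y)$ to read off the isotropy action downstairs. Writing $V = (T_xX)^{d\sigma_x}$ for the $d\sigma$-fixed real form (so $T_xX = V \otimes_{\br} \bc$, and $\rho_x = \rho_x^V \oplus \rho_x^V$ as real $G$-reps), a direct computation using that $d\sigma_x$ anti-commutes with $J_x$ and that the antipodal differential on $\bs^m$ is $-\mathrm{id}$ yields the Case B isotropy rep
\[
\bigoplus_{j \neq k} (\chi_j \chi_k)|_D \boxtimes \mathbf{1}_G \ \oplus\ \mathbf{1}_D \boxtimes \rho_x^V \ \oplus\ \chi_k|_D \boxtimes \rho_x^V.
\]

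The final step is a multiplicity-matching argument. Comparing multiplicities of the trivial $D$-character in the $D$-restrictions distinguishes Case A reps (multiplicity $2n$ where $n = \dim_{\bc} X$) from Case B reps (multiplicity $n$); within Case B, comparing multiplicities of the distinguished character $\chi_k|_D$ shows that distinct $k \in I_1$ produce non-isomorphic $\mathcal{G}$-reps. Hence $[P(m, X), \mathcal{G}] = 0$ is equivalent to the conjunction of (a) $\#\{x \in X^G : \rho_x \cong \rho\}$ even for every real $G$-rep $\rho$ (needed only when $I_0 \neq \emptyset$), and (b) $\#\{x \in A : \rho_x^V \cong \tau\}$ even for every real $G$-rep $\tau$. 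I close by invoking two properties: first, $\sigma$ pairs $X^G \setminus A$ freely into orbits whose two elements have isomorphic $G$-isotropy reps (conjugation by $d\sigma_x$), so parities over $X^G$ and over $A$ agree; and second, since every complex irreducible representation of $(\bz_2)^q$ takes values in $\{\pm 1\}$ and has a canonical real form, the correspondence $\rho_x \leftrightarrow \rho_x^V$ on $A$ is a bijection on iso classes, so (a) and (b) are each equivalent to $[X, G] = 0$. The main obstacles lie in Step 3 --- correctly handling the twisted representation arising from the deck transformation --- and in the character bookkeeping required to separate iso classes in Step 4.
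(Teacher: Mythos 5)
Your proof is correct and follows essentially the same route as the paper: Stong's injectivity of $\eta_*:Z_*(\mathcal G)\to R_*(\mathcal G)$ reduces everything to parity counts of isotropy representations; your Case B isotropy representation $\bigoplus_{j\ne k}(\chi_j\chi_k)|_D\boxtimes\mathbf 1_G\;\oplus\;\mathbf 1_D\boxtimes\rho_x^V\;\oplus\;\chi_k|_D\boxtimes\rho_x^V$ is exactly the decomposition of Proposition \ref{tgt}; your comparison of multiplicities of the character $\chi_k|_D$, showing that fixed points lying over distinct $[e_k]$ carry non-isomorphic $\mathcal G$-representations, is the paper's computation leading to ``$k+n=l$, a contradiction''; and your two closing observations (the free $\sigma$-pairing of $X^G\setminus X^G_{\mathbb R}$ preserving isotropy types, and the doubling $\rho_x\cong\rho_x^V\oplus\rho_x^V$ with unique halving of multiplicities for $(\mathbb Z_2)^q$-representations) together constitute Lemma \ref{square}, i.e.\ $[X,G]=[X_{\mathbb R},G]^2$ combined with the absence of nilpotents in $R_*(G)$. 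The one genuine difference is your Case A: you allow a coordinate character $\chi_{k_0}$ to restrict trivially to the subgroup $D$, in which case $e_{k_0}$ itself is $D$-fixed on $\mathbb S^m$ and the fibre over $[e_{k_0}]$ contributes fixed points $[e_{k_0},x]$ for \emph{every} $x\in X^G$, not only for $x\in X^G_{\mathbb R}$. The paper asserts without qualification that $P(m,X)^{\mathcal G}=\{[e_j,x]\mid x\in X^G_{\mathbb R},\,1\le j\le m+1\}$, which is correct only when no $\chi_j|_D$ is trivial; your treatment covers the remaining case (the extra fixed points are separated from the Case B ones by the multiplicity of the trivial $D$-character, and their cancellation is again equivalent to $[X,G]=0$), so your argument is in this respect more complete than the paper's. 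One caveat you share with the paper: both your separation of fibres and the paper's contradiction $k+n=l$ with $l\le 1$ require $n=\dim_{\mathbb C}X\ge 2$; this hypothesis is implicit in both arguments and should be flagged.
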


We illustrate, in \S4, the above result when $X$ is a complex flag manifold.     

%%%%%%%%%%%%%%%%%%%%%%%%%%%%%%%
\section{Equivariant cobordism and the representation ring}
Let $M^d$ be a smooth  compact manifold with a smooth $G\cong (\mathbb Z_2)^q$ action with only finitely many stationary points.  Then $[M,\phi]\in Z_*(G)$ is determined completely by the isotropy representations of $G$ at the stationary points of $M$.  
More precisely, denote by $R_*(G)$ the {\it representation ring} of $G$ whose elements are formal $\mathbb Z_2$-linear combinations of isomorphism 
classes of finite dimensional real representations of $G$. If $U,V$ are two $G$-representations, the product $[U].[V]$ in $R(G)$ is, by definition, the class of $U\oplus V$ (with the diagonal $G$-action). Note that $R(G)$ is graded via the degree of the representation.    We may identify $R_*(G)$ 
with the polynomial algebra over $\mathbb Z_2$  with indeterminates $v_\chi$,  
$\chi\in \hat G:=\textrm{Hom}(G,\mathbb Z_2)$, the group of 
characters of $G$. Under this isomorphism $[V]$ corresponds to the monomial 
$v_{\chi_1}^{m_1}\cdots v_{\chi_r}^{m_r}$ where $V\cong \mathbb R^{m_1}_{\chi_1}\oplus \cdots \oplus \mathbb R^{m_r}_{\chi_r}$. 
Here $\mathbb R^k_\chi$ denotes the direct sum of $k$ copies of the $1$-dimensional representation 
$\mathbb R_\chi\cong \mathbb R$ on which $G$ acts via the character $\chi$.  
Given $(M,\phi)$,  
we obtain an element $\sum_{x\in M^G} [T_xM]\in R(G)$, where $T_xM$ denotes tangent space regarded as the isotropy representation of $G$.    The map 
$\eta_*:Z_*(G)\to R_*(G)$ sending $[M,\phi]$ to $\sum_{x\in 
M^G}[T_xM]$ is a well-defined algebra homomorphism. By a result of Stong \cite{stong}, $\eta_*$ is in fact a monomorphism.    
The finiteness of $M^G$ implies that $T_xM$ does not contain the trivial $G$-
representation.  So the image of $\eta_*$ is contained in the subalgebra of 
$R_*(G)$ generated by the {\it nontrivial}  one-dimensional representations of $G$.

The diagonal subgroup $D_n\cong \mathbb Z_2^n$ of $O(n)\subset U(n)$ acts the complex flag manifold 
$U(n)/U(n_1)\times \cdots\times U(n_r)=\mathbb CG(n_1,\ldots, n_r)$ with finitely many stationary points.  
Here $n=\sum_{1\le j\le r} n_j$.  In fact, the stationary points are precisely the flags $\mathbf L=(L_1,\cdots, L_r)$ where each component $L_j$ is spanned by a subset of the standard basis $e_1,\ldots, e_n$.  Similarly, $D_{m+1}\subset O(m+1)$ acts on 
$\mathbb S^m$.  Although there is no point on the sphere which is stationary, the induced action on 
the real projective space $\mathbb RP^m$ has $m+1$ stationary points, namely,  $[e_1],\ldots, [e_{m+1}]$.  
We obtain an action of $D_{m+n+1}=D_{m+1}\times D_n$ on $\mathbb S^m\times \mathbb C G(n_1,\ldots,n_r)$.  This action 
yields an action of $D_{m+n+1}$ on $P(m,\mathbb C G(n_1,\ldots, n_r))$ with finitely many stationary points, namely, $[e_j,\mathbf L], 1\le j\le m+1,$ 
with $\mathbf L$ as above.  
We shall consider the restricted action of certain 
subgroups of 
$D_{m+n+1}$ on $P(m,\mathbb CG(n_1,\ldots,n_r))$ with finitely many stationary points and obtain results on the (non) vanishing of 
the equivariant cobordism classes of $P(m, \mathbb CG(n_1,\ldots,n_r))$.

\section{Equivariant cobordism of generalized Dold manifolds.}

Let $(X,J)$ be an almost complex manifold and let $\sigma$ be a complex conjugation with non-empty fixed point set.  We denote 
$\fix(\sigma)$ by $X_\mathbb R$.  Suppose that $G\cong \mathbb Z_2^q$ acts smoothly on $X$ such that  
(i) $t\circ\sigma=\sigma \circ t$ for all $t\in G$, 
and, (ii) the stationary point set $X^G$ for the $G$ action is finite. Then $G$ acts on $X_\mathbb R$ with 
$X_\mathbb R^G=X_\mathbb R\cap X^G$. 

We have the following lemma which is a straightforward generalization of \cite[Theorem 24.4]{cf}. 

\begin{lemma} \label{square} 
With the above notations, suppose that $t\circ \sigma=\sigma\circ t$ for all $t\in G$ and that, for each $x\in X^G_\mathbb R$, 
the isotropy representations are $\mathbb C$-linear; equivalently 
$J_x:T_xX\to T_{x}X$ is $G$-equivariant.  Then $[X,G]=[X_\mathbb R,G]^2$ in $Z_*(G)$.    
\end{lemma}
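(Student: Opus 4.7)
The plan is to reduce the assertion to an identity in the representation ring $R_*(G)$ via Stong's monomorphism $\eta_*:Z_*(G)\hookrightarrow R_*(G)$. Thus it suffices to show
\[
\eta_*([X,G]) \;=\; \eta_*([X_\mathbb{R},G])^2 \quad \text{in } R_*(G).
\]

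The key observation is that $\sigma$ commutes with the $G$-action, and hence $\sigma$ restricts to an involution on $X^G$ whose fixed point set is exactly $X^G_\mathbb{R}=X^G\cap X_\mathbb{R}$. Writing
\[
\eta_*([X,G])\;=\;\sum_{x\in X^G_\mathbb{R}}[T_xX]\;+\;\sum_{x\in X^G\setminus X_\mathbb{R}}[T_xX],
\]
I would show that the second sum vanishes. Indeed, the points in $X^G\setminus X_\mathbb{R}$ come in pairs $\{x,\sigma(x)\}$ with $x\ne \sigma(x)$, and since $\sigma\circ g=g\circ\sigma$ for every $g\in G$, the differential $T_x\sigma:T_xX\to T_{\sigma(x)}X$ is an $\mathbb{R}$-linear $G$-equivariant isomorphism. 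Hence $[T_xX]=[T_{\sigma(x)}X]$ in $R_*(G)$, so each pair contributes $2[T_xX]=0$ modulo $2$.

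For $x\in X^G_\mathbb{R}$, the hypothesis that $J_x$ is $G$-equivariant gives an orthogonal decomposition (after choosing a $J$-invariant, $G$-invariant metric near $x$)
\[
T_xX\;=\;T_xX_\mathbb{R}\;\oplus\; J_x(T_xX_\mathbb{R})
\]
into real $G$-subrepresentations, and $v\mapsto J_xv$ is a $G$-equivariant isomorphism $T_xX_\mathbb{R}\xrightarrow{\sim} J_x(T_xX_\mathbb{R})$. (That $T_xX_\mathbb{R}$ is the $+1$-eigenspace of $T_x\sigma$ and $J_x(T_xX_\mathbb{R})$ is the $-1$-eigenspace is the content of \cite[Thm.~24.4]{cf}, generalized to the equivariant setting by $G$-equivariance of $J_x$.) Consequently
\[
[T_xX]\;=\;[T_xX_\mathbb{R}]\cdot[T_xX_\mathbb{R}]\;=\;[T_xX_\mathbb{R}]^2\quad\text{in } R_*(G).
\]

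Combining the two contributions and using that $R_*(G)$ is a $\mathbb{Z}_2$-algebra, so the Frobenius $u\mapsto u^2$ is additive, yields
\[
\eta_*([X,G])\;=\;\sum_{x\in X^G_\mathbb{R}}[T_xX_\mathbb{R}]^2\;=\;\Bigl(\sum_{x\in X^G_\mathbb{R}}[T_xX_\mathbb{R}]\Bigr)^2\;=\;\eta_*([X_\mathbb{R},G])^2\;=\;\eta_*\bigl([X_\mathbb{R},G]^2\bigr),
\]
and injectivity of $\eta_*$ completes the proof. I do not anticipate a serious obstacle: the only mild subtleties are verifying $G$-equivariance of the pairing $x\leftrightarrow\sigma(x)$ on $X^G\setminus X_\mathbb{R}$ and checking that the $G$-equivariance of $J_x$ really does identify the two real summands of $T_xX$ as $G$-modules, both of which follow directly from the hypotheses.
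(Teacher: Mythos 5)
Your proposal is correct and follows essentially the same route as the paper: reduce to $R_*(G)$ via Stong's monomorphism, cancel the contributions of the non-real stationary points in $\sigma$-paired couples using $G$-equivariance of $T_x\sigma$, identify $T_xX_\mathbb{R}$ and $J_x(T_xX_\mathbb{R})$ as the $\pm 1$-eigenspaces of $T_x\sigma$ so that $[T_xX]=[T_xX_\mathbb{R}]^2$, and finish with the Frobenius identity in characteristic $2$. The only cosmetic difference is your invocation of an invariant metric, which the paper does not need since the eigenspace decomposition already does the work.
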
  

\begin{proof}
Let $x\in X^G$.  Our hypothesis that $t\circ \sigma=\sigma\circ t$ for all $t\in G$ implies that $T_x\sigma:T_xX\to T_{\sigma(x)}X$ is an 
isomorphism of $G$-modules.  Indeed, for $t\in G$, we have 
$T_x\sigma(t.v)=T_x\sigma (T_xt(v))=T_{\sigma(x)}t(T_x\sigma(v))=t.T_x\sigma(v)$ for all $v\in T_xX$.  
In particular $[T_xX]+[T_{\sigma(x)} X]=0$ in 
$R_*(G)$. 

Suppose that $\sigma(x)=x$, that is, $x\in X_\mathbb R$.  We have $T_xX_\mathbb R \cap J_x(T_xX_\mathbb R)=0$.  To see this, we need only 
observe that $T_xX_\mathbb R$ and $J(T_xX_\mathbb R) $ are the $1$- and $-1$-eigenspaces of $T_x\sigma:T_xX\to T_xX$.  Since $J_x:T_xX_\mathbb R\to J_x(T_x X_\mathbb R)$ is 
$G$-equivariant, $T_xX_\mathbb R$ and $J_x (T_x X_\mathbb R)$ are isomorphic as $G$-modules and so $[T_xX]=[T_xX_\mathbb R][J_x(T_xX_\mathbb R)]=[T_xX_\mathbb R]^2$ 
in $R_*(G)$.  

Now $\eta_*([X,G])=\sum_{x\in X^G} [T_x X]=\sum_{x\in X_\mathbb R^G} [T_xX]+\sum_{x\in X^G\setminus X_\mathbb R}[T_x X]
=\sum_{x\in X^G_\mathbb R}[T_xX_\mathbb R]^2$, since $[T_{\sigma(x)}X]$ cancels out $[T_xX]$ for each 
$x\in X^G\setminus X_\mathbb R$.  
So $\eta_*([X,G])=\sum_{x\in X_\mathbb R^G}[T_xX_\mathbb R]^2=(\sum_{x\in X_\mathbb R}[T_xX_\mathbb R])^2
=\eta_*([X_\mathbb R]^2)$.  Since $\eta_*:Z_*(G) \to R_*(G)$ is a monomorphism, we are done.  
\end{proof}

We remark that when $J$ arises from a complex structure on $X$ and $G$ acts as a group of biholomorphisms, 
$J_x$ is $G$-equivariant for all $x\in X^G$.   

Suppose that for the restricted action of a subgroup $D\subset D_{m+1}\subset O(m+1)$ on the sphere $\mathbb S^m=O(m+1)/O(m)$, the induced 
action on $\mathbb RP^m$ has only finitely many 
stationary points.   These points are precisely $[e_j]\in \mathbb RP^m, 1\le j\le m+1$.

Consider the action of $D\times G$ on 
$\mathbb S^m\times X$. 
We assume that the $t\circ \sigma=\sigma\circ t$ for all $t\in G$ so that 
the action of $\mathcal G:=D\times G$ on $\mathbb S^m\times X$ descends to an action on $P(m,X)$. 
It is readily verified that $P(m,X)^{\mathcal G}$ equals $\{[e_j,x]\mid x\in X^G_\mathbb R, 1\le j\le m+1\}$.  
Evidently, $[P(m,X),\mathcal G]=0$ if $[P(m,X),D_{m+1}\times G]=0$.

Let $x_0\in X_\mathbb R^G$.  Since $[v,x_0]=[-v,\sigma(x_0)]=[-v,x_0]$, we have a well-defined cross-section $s_{x_0}:\mathbb RP^m\to P(m,X)$ defined by $[v]\mapsto [v,x_0]$.  In fact 
$([v],x)\mapsto [v,x]$ is a well-defined imbedding $s: \mathbb RP^m\times X_\mathbb R\to P(m,X).$   We shall denote by $\iota_j: X\hookrightarrow 
P(m,X)$ the fibre-inclusion $x\mapsto [e_j,x], 1\le j\le m+1$.   
We note that, with the trivial $G$ action on $\mathbb RP^m$ understood, the embedding $s_{x_0}$ is $\mathcal G$-equivariant.  In fact, $s$ is $\mathcal G$-
equivariant: if $\gamma=(\alpha,t)\in D\times G$, then, we see that 
$s(\gamma.([v],x))=s([\alpha.v],t.x)=[\alpha. v, t.x]=\gamma.[v,x]=\gamma.s([v],x)$.   
On the other hand, $\iota_j$ is not $\mathcal G$-equivariant since $e_j$ is not $D$-fixed.  However, it turns out that, after twisting 
the action of $\mathcal G$ on $X$, $\iota_j$ becomes $\mathcal G$-equivariant, as we shall now explain.  Let $\chi'_j:D\to \langle \sigma\rangle $ be the homomorphism whose kernel equals the isotropy group $D_j\subset D$ at $e_j\in \mathbb S^m$.  
Define the $\chi'_j$-twisted action of $\mathcal G$ on $X$ by $(\alpha, t)(x):=\chi'_j(\alpha) (t.x)$.  We note that both the $\mathcal G$-
actions agree on 
$X_\mathbb R$ and that the $G$-action obtained from the restriction of the action 
to $p^{-1}([e_j])$ of the twisted $\mathcal G$-action on $P(m,X)$ 
is the same as the that of the original $G$-action on $X$.   We shall denote by $X_j\cong X$ the fibre $p^{-1}([e_j])\subset P(m,X)$. 

We now verify that $\iota_j$ is $\mathcal G$-equivariant with respect to the twisted $\mathcal G$-action on $X$.  
For this purpose, let $\gamma=(\alpha,t)\in \mathcal G, x\in X$.  
Then $\iota_j(\gamma. x)=\iota_j(\chi'_j(\alpha) t .x)=[e_j,\chi_j'(\alpha) t.x]$ while $\gamma.(\iota_j(x))=\gamma.[e_j, x]=[\alpha e_j, t.x]$.  
If $\alpha(e_j)=e_j$, then $\chi_j(\alpha)=1$ and so it follows that $\iota_j(\gamma.x)=\gamma.(\iota_j(x))$.  If $\alpha(e_j)=-e_j,$ then 
$\chi_j(\alpha)=\sigma$ and so $\gamma(\iota_j(x))=[-e_j,t.x]=[e_j,\sigma t.x]=[e_j, \chi_j'(\alpha) t.x]= \iota_j(\gamma.x)$, proving our 
claim.  It follows that the $\iota_{j*}:T_{x_0} X\to T_{[e_j,x_0]} P(m,X)$ is $\mathcal G$-equivariant and  
so, we have a decomposition of the tangent space $T_{[e_j,x_0]}P(m,X)$ into $\mathcal G$-submodules:
%%%
\[T_{[e_j,x_0]}P(m,X)=s_{x_0*}(T_{[e_j]} \mathbb RP^m)
\oplus \iota_{j*} (T_{x_0} X)=s_{x_0*}T_{[e_j]}\mathbb RP^m\oplus T_{[e_j,x_0]}X_j. \eqno(1)\] 
%%%%%
Since $x_0\in X_\mathbb R$ and since the twisted $\mathcal G$-action on $X_\mathbb R$ coincides 
with the untwisted action, it follows that $\iota_{j*}T_{x_0}X_\mathbb R\subset T_{[e_j,x_0]}X_j$ is isomorphic as a $\mathcal G$-submodule to $T_{x_0}X_\mathbb R$.   We claim that its $\mathcal G$-complement 
$\iota_{j*}(J_{x_0} T_{x_0}X_\mathbb R)\subset T_{[e_j, x_0]}X_j$ is isomorphic as a $\mathcal G$-module to 
$E_j\otimes T_{x_0}X_\mathbb R$.   Here $E_j=\mathbb R e_j$ the 
one dimensional $D$-representation corresponding to the character $\chi_j$. 
In fact, $\theta: E_j\otimes T_{x_0}X_\mathbb R\to \iota_{j*}J_{x_0}T_{x_0}X_\mathbb R$, defined by $e_j\otimes u\mapsto \iota_{j*}(Ju)$, is 
an isomorphism of $\mathcal G$-modules. 
To see this, let $\gamma=(\alpha,t)\in D\times G$.  We have $\gamma.u=t.u =t_*(u)~\forall u\in T_{x_0}X_\mathbb R$.
Also, $\gamma.e_j=\chi_j(\alpha) e_j=\pm e_j$ where the sign is positive  precisely if $\alpha\in D_j\subset D$.  
Thus $\theta(\gamma (e_j\otimes u))=\theta(\pm e_j\otimes t_*u)=\pm\iota_{j*}J(t_*u)=\pm \iota_{j*}t_*(J u)$ where the sign is positive precisely 
when $\alpha\in D_j\subset D$.   

On the other hand, since $\iota_{j*}$ is $\mathcal G$-equivariant with respect to the 
twisted $\mathcal G$-action on $T_{x_0}X$, 
 $\gamma(\theta(e_j\otimes u))=\gamma (\iota_{j*} Ju)=\iota_{j*}(\gamma .Ju)=\iota_{j*} ((\alpha, t).(Ju))=\iota_{j*}(\chi'_j(\alpha) t_*(Ju))$.  
Note that as $u\in T_{x_0}X_\mathbb R$, $Ju$ is in the $-1$-eigen space of $\sigma_*$. So, from the definition of $\chi_j':D\to \langle\sigma\rangle$, we have  
\[\chi_j'(\alpha)_* .Ju=\left\{ \begin{array}{ll} 
\sigma_*Ju=-Ju & \textrm{~if~} \alpha\notin D_j\\
Ju& \textrm{~if~} \alpha\in D_j.\\
\end{array}
\right.  \eqno(2)\]
Consequently, 
 $\gamma\theta(e_j\otimes u)=\pm \iota_{j*}( t_*(Ju))$ where the sign is positive precisely when $\alpha\in D_j\subset D$. Hence $\theta$ is a $\mathcal G$-isomorphism. Therefore $\theta$ is $\mathcal G$-isomorphism.

We summarise the above discussion in the following. 

\begin{proposition} \label{tgt}  Suppose that $t_*J_{x_0}=J_{x_0}t_*$ for all $t\in G, ~x_0\in X^G_\mathbb R$.  
With the above notations,   
we have, for $1\le j\le m+1$ and $x_0\in X_\mathbb R^G$,  
an isomorphism of $\mathcal G$-modules:
 \[T_{[e_j,x_0]}P(m,X)\cong 
T_{[e_j]}\mathbb RP^m\oplus T_{x_0}X_\mathbb R\oplus (E_j\otimes T_{x_0}X_\mathbb R).\eqno(3) \] \hfill  $\Box$
\end{proposition}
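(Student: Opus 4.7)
The plan is to package the computations in the paragraphs immediately preceding Proposition \ref{tgt} into a clean $\mathcal{G}$-equivariant splitting of $T_{[e_j,x_0]}P(m,X)$. Since the section $s_{x_0}:\mathbb RP^m\to P(m,X)$ is $\mathcal{G}$-equivariant (with trivial $G$-action on $\mathbb RP^m$) and the fibre inclusion $\iota_j:X\to P(m,X)$ becomes $\mathcal{G}$-equivariant once one replaces the original action on $X$ by the $\chi_j'$-twisted action, and since the images of these two embeddings meet transversally at $[e_j,x_0]$, I would first record the internal direct sum
\[T_{[e_j,x_0]}P(m,X)=s_{x_0*}T_{[e_j]}\mathbb RP^m\oplus \iota_{j*}T_{x_0}X\]
of $\mathcal{G}$-modules already obtained in equation (1).

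Next I would decompose the fibre part using the real structure. Exactly as in the proof of Lemma \ref{square}, the hypothesis $t_*J_{x_0}=J_{x_0}t_*$ makes the $\sigma_*$-eigenspace decomposition $T_{x_0}X=T_{x_0}X_\mathbb R\oplus J_{x_0}(T_{x_0}X_\mathbb R)$ a splitting by $G$-submodules. Because the twisted and untwisted $\mathcal{G}$-actions agree on $X_\mathbb R$, the summand $\iota_{j*}T_{x_0}X_\mathbb R$ is canonically $\mathcal{G}$-isomorphic to $T_{x_0}X_\mathbb R$ with trivial $D$-action. For the remaining summand I would verify that the map $\theta:E_j\otimes T_{x_0}X_\mathbb R\to \iota_{j*}J_{x_0}T_{x_0}X_\mathbb R$ defined by $e_j\otimes u\mapsto \iota_{j*}(J_{x_0}u)$ is a $\mathcal{G}$-module isomorphism.

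The step that needs most care, and which is the heart of the proposition, is the equivariance of $\theta$. Writing $\gamma=(\alpha,t)\in D\times G$, the $D$-factor acts on $e_j$ through the character $\chi_j$, producing $\pm e_j$ according as $\alpha\in D_j$ or not; on $J_{x_0}u$ the twisted action applies $\chi_j'(\alpha)$, which is $\sigma_*$ or the identity, and since $J_{x_0}u$ lies in the $-1$-eigenspace of $\sigma_*$ this contributes another $\pm1$, precisely as displayed in equation (2). Because $\chi_j$ and $\chi_j'$ are designed to have the same kernel $D_j$, the two signs always match, so the tensor-product action on $E_j\otimes T_{x_0}X_\mathbb R$ is intertwined with the twisted action on $\iota_{j*}J_{x_0}T_{x_0}X_\mathbb R$. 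Assembling the three pieces then yields the claimed isomorphism (3); I do not expect any further obstacle beyond this sign-matching bookkeeping, which is the whole reason the character $\chi_j'$ was introduced.
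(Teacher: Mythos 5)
Your proposal is correct and follows essentially the same route as the paper: the splitting (1) via the equivariant section $s_{x_0}$ and the $\chi_j'$-twisted fibre inclusion $\iota_j$, the eigenspace decomposition $T_{x_0}X=T_{x_0}X_\mathbb R\oplus J_{x_0}(T_{x_0}X_\mathbb R)$, and the isomorphism $\theta$ whose equivariance rests on $\chi_j$ and $\chi_j'$ sharing the kernel $D_j$. The sign-matching bookkeeping you identify as the heart of the argument is exactly the computation the paper carries out around equation (2).
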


We apply the above proposition to identify the image of $[P(m,X),\mathcal G]$ in $R(\mathcal G)$ under $\eta_*$.

 Since $D$ is possibly a proper subgroup of $D_{m+1}$, the characters $\chi_j$ are not necessarily 
linearly independent.  However the $D$-representations $E_j, 1\le j\le m+1,$ are pairwise non-isomorphic; equivalently the characters 
$\chi_j, 1\le j\le m+1,$ are pairwise distinct. Indeed, suppose that for some $i\ne j$, $E_i\cong E_j$, 
then $D_i=D_j$ and so, for any $a,b\in \mathbb R$, we have
$\chi.(ae_i+be_j)=\pm(ae_i+be_j)~\forall \chi\in D$.  This implies that, for any $a,b$ not both zero, the point  
$\mathbb R(ae_i+be_j)\in \mathbb RP^m$ is $D$-fixed.  
This contradicts our hypothesis that $\mathbb RP^m$ has only finitely many stationary points for the $D$-action.  
We shall write $\chi_j$ to also denote the isomorphism class $[E_j]\in R(D)$ of the irreducible 
representation of $D$. 

\noindent
{\it Notation.} Let $k$ be a positive integer.
We shall denote by $[k]$ the set $\{1,\cdots, k\}$.

Fix a basis $t_r, 1\le r\le q,$ for $G$. Then the character group $\hat G$ consists of elements $y_\alpha, \alpha\subset [q],$ where $y_\alpha(t_j)= -1$ if and only if 
$j\in \alpha$.  We abbreviate $y_{\{j\}}, y_{\{i,j\}}$ to $y_j, y_{i,j}, 1\le i, j\le q, i\ne j,$ respectively and denote $y_\emptyset $ by $y_0$.  
The $y_\alpha$ generate $R(G)$ as a $\mathbb Z_2$-algebra.  
An entirely analogous notation is used for $R(D)$.  
The elements $\chi_j, y_r, 1\le j\le m+1, 
1\le r\le q,$ form a basis for $\hat {\mathcal G}$ and we have $R(\mathcal G)=R(D)\otimes R(G)$.

The representation ring has an additional structure arising from tensor product of representations.  If $A=E_\alpha, 
B=E_\beta$ are $1$-dimensional representations of $R(D)$ corresponding to characters $\chi_\alpha,\chi_\beta$, then 
$A\otimes B$ has character $\chi_{\alpha\Delta\beta}$ where $\alpha\Delta \beta$ stands for the symmetric 
difference $\alpha\cup \beta\setminus \alpha\cap \beta$.  Note that the group operation in $\hat D$ is given by 
symmetric difference: $\chi_\alpha.\chi_\beta=\chi_{\alpha\Delta\beta}$. But we will avoid using the notation 
$\chi_\alpha.\chi_\beta\in \hat D$ since the product has been given a different meaning in $R(D)$; instead we shall denote 
this character by $\chi_\alpha\otimes \chi_\beta$.   
In the more general case where $[A]=\chi_{\alpha_1}\cdots\chi_{\alpha_r}, [B]=\chi_{\beta_1}\cdots\chi_{\beta_s}\in R(D)$, 
we have $[A\otimes B]=\prod_{1\le i\le r,1\le j\le s}(\chi_{\alpha_i}\otimes \chi_{\beta_j})$.   

We will need to consider tensor product representations of $\mathcal G$ of the form $E\otimes V$ where $E, V$ are 
representations of $D$ and $G$ respectively with $E$ being $1$-dimensional.  

It will be convenient to  identify 
$\chi_\alpha$ with $\chi_\alpha\otimes y_0$ and $ y_\beta$ with 
$\chi_0\otimes y_\beta$  for $\alpha\subset 
[m+1], \beta\subset [ q]$. 
Let $[E]=\chi\in R(D)$ and 
suppose that $[V]=y_{\beta_1}\cdots y_{\beta_d}\in R(G)$.  Then $[E\otimes V]
=(\chi\otimes y_{\beta_1})\cdots (\chi\otimes y_{\beta_d})\in R(\mathcal G)$.

As is well-known, 
$T_{[e_j]}\mathbb RP^m=E_j\otimes E_j^\perp$; here $E_j^\perp=\oplus_{1\le i\le m+1, i\ne j} E_i \subset \mathbb R^{m+1}$, the orthogonal complement to $E_j$ in $\mathbb R^{m+1}$.  
(See \cite{ms}.)  
This is in fact an isomorphism of $\mathcal G$-modules where the $\mathcal G$ action is via the projection $\mathcal G\to D$.  So 
$[T_{[e_j]}\mathbb RP^m]=\prod_{1\le i\le m+1, i\ne j}\chi_{i}\otimes \chi_j\in R(\mathcal G).$    
 
The following proposition is now immediate from Proposition \ref{tgt}.

We shall denote by $f(y_\beta)\in R(G)$ a polynomial in the variables $y_\beta,\beta\subset [q]$.

\begin{proposition}\label{gendold}  Suppose that $G\cong \mathbb Z_2^q$ acts on $(X,J)$ with finitely many stationary 
points such that that the isotropy representation is $\mathbb C$-linear at each point of $X_\mathbb R^G$.  Suppose that 
the action 
of $D\subset D_{m+1}$ on $\mathbb RP^m$ has only finitely many stationary points.  
With the above notation, 
set $f_p(y_\beta):=[T_pX_\mathbb R]\in R(G), p\in X_\mathbb R^G$.  Then 
\[\eta([P(m,X),\mathcal G])
=\sum_{p\in X_\mathbb R^G}\sum_{1\le j\le m+1} \prod_{1\le i\le m+1, i\ne j} \chi_{i}\otimes \chi_j. f_{p}(y_\beta)f_p(\chi_j\otimes y_\beta) \eqno(4) \]
where $\mathcal G=D\times G$. \hfill $\Box$
\end{proposition}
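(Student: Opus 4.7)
The plan is to combine Proposition \ref{tgt} with the definition of $\eta_*$, relying on the tensor-product rules in $R(\mathcal G)=R(D)\otimes R(G)$ recorded just before the statement.

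First I would identify the $\mathcal G$-fixed set: since $\mathcal G=D\times G$ and the hypotheses match those in the discussion preceding Proposition \ref{tgt}, we already know $P(m,X)^{\mathcal G}=\{[e_j,p]\mid 1\le j\le m+1,~p\in X_\mathbb R^G\}$. So by the definition of $\eta_*$ we need
\[\eta_*([P(m,X),\mathcal G])=\sum_{p\in X_\mathbb R^G}\sum_{j=1}^{m+1}[T_{[e_j,p]}P(m,X)]\in R(\mathcal G).\]

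Next I would apply the $\mathcal G$-module decomposition (3) of Proposition \ref{tgt}, which turns each summand into the product
\[[T_{[e_j]}\mathbb RP^m]\cdot[T_pX_\mathbb R]\cdot[E_j\otimes T_pX_\mathbb R]\]
in $R(\mathcal G)$. The first factor was already computed in the paragraph preceding the statement: using $T_{[e_j]}\mathbb RP^m\cong E_j\otimes E_j^\perp$ (as $\mathcal G$-modules, where $\mathcal G$ acts through its projection to $D$) and expanding $E_j^\perp=\bigoplus_{i\ne j}E_i$ via the tensor-product rule for $1$-dimensional representations yields $\prod_{i\ne j}\chi_i\otimes\chi_j$. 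The second factor is $f_p(y_\beta)$ by the definition of $f_p$.

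The only point requiring a moment of care is the third factor. Since $T_pX_\mathbb R$ is a $G$-representation on which $D$ acts trivially, its class is a polynomial $f_p(y_\beta)$ in the variables $y_\beta$, i.e.\ a $\mathbb Z_2$-linear combination of monomials $y_{\beta_1}\cdots y_{\beta_d}$. Because $[E_j]=\chi_j$ is $1$-dimensional, the tensor product rule recalled in the excerpt gives
\[[E_j\otimes(\mathbb R_{y_{\beta_1}}\oplus\cdots\oplus\mathbb R_{y_{\beta_d}})]=(\chi_j\otimes y_{\beta_1})\cdots(\chi_j\otimes y_{\beta_d}),\]
which is precisely the effect of substituting $y_\beta\mapsto\chi_j\otimes y_\beta$ in a monomial; extending $\mathbb Z_2$-linearly, $[E_j\otimes T_pX_\mathbb R]=f_p(\chi_j\otimes y_\beta)$. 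Substituting the three factors and summing over $p$ and $j$ produces the formula (4).

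There is no real obstacle here; the proposition is essentially a transcription of Proposition \ref{tgt} into $R(\mathcal G)$. The one step that deserves explicit verification is the substitution rule $[E_j\otimes V]=f_p(\chi_j\otimes y_\beta)$, which uses only that $E_j$ is $1$-dimensional together with additivity of classes under direct sum and the tensor-product formula on $\hat{\mathcal G}$.
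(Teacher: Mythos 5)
Your proposal is correct and follows exactly the route the paper intends: the paper declares the proposition ``immediate from Proposition \ref{tgt}'', and your argument simply makes explicit the three ingredients already set up in the preceding discussion (the identification of $P(m,X)^{\mathcal G}$, the decomposition (3), and the tensor-product/substitution rule giving $[E_j\otimes T_pX_\mathbb R]=f_p(\chi_j\otimes y_\beta)$). Nothing is missing; note only that since $[T_pX_\mathbb R]$ is the class of a single representation, $f_p$ is in fact a monomial, so the $\mathbb Z_2$-linear extension step is not even needed.
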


We now turn to the proof of Theorem \ref{main}.

\noindent 
{\it Proof of Theorem \ref{main}.} It suffices to show that $\eta([X,G])=0\iff \eta([P(m,X),\mathcal G])= 0$.

First suppose that $\eta([X,G])=0$.  Then  we have $[X_\mathbb R,G]=0$ in view of Lemma \ref{square} and 
the fact that $Z_*(G)\subset R(G)$ has no non-zero nilpotent elements.  So the $G$-stationary points of $X_\mathbb R$ occur in pairs $p,q$, $p\ne q$, such that 
$T_pX\cong T_qX$ as $G$-modules.  It follows that $E_j\otimes T_pX\cong E_j\otimes T_qX$ as $\mathcal G$-modules.  
Hence $T_{[e_j,p]}P(m,X)\cong T_{[e_j,q]} P(m,X)$ for every $j\in [m+1]$ 
by Proposition \ref{tgt}.  It follows that $\eta([P(m,X),\mathcal G])$ vanishes.  

For the converse part, assume that  $[P(m,X),\mathcal G]=0$.  This means that the there is a fixed point free 
bijective correspondence $(i,p)\leftrightarrow (j,q)$ on $P(m,X)^\mathcal G$ 
such that $T_{[e_i,p]} P(m,X)\cong T_{[e_j,q]}P(m,X)$ as $\mathcal G$-modules.
That is, 
\[T_{[e_i]}\mathbb RP^m\oplus T_pX_\mathbb R\oplus E_i\otimes T_pX_\mathbb R\cong T_{[e_j]}\mathbb RP^m
\oplus T_{q}X_\mathbb R\oplus E_j\otimes T_qX_\mathbb R \eqno(5)\] 
as $\mathcal G=D\times G$-module, by Proposition \ref{tgt}. 

Restricting to $D=D\times 1\subset \mathcal G,$ we obtain
 \[T_{[e_i]}\mathbb RP^m\oplus \mathbb R^n\oplus 
E_i\otimes \mathbb R^n= T_{[e_j]}\mathbb RP^m\oplus \mathbb R^n\oplus E_j\otimes \mathbb R^n,\eqno(6) \] 
where $\mathbb R^n$ is the 
trivial $D$-representation of degree $n$.  

Since  $T_{[e_i]}\mathbb RP^m\oplus \mathbb R=(\oplus_{i\ne k}E_k\otimes E_i)\oplus E_i\otimes E_i=V\otimes E_i$ where 
$V=\oplus_{1\le k\le m+1} E_k$, 
cancelling $\mathbb R^{n-1} $ on both sides and using $E_i\otimes E_i=\mathbb R$ 
we obtain from (6) that $(V\otimes E_i)\oplus (E_i\otimes \mathbb R^n)=(V\otimes E_j)\oplus (E_j\otimes\mathbb R^n)$.  That is,
 \[(V\oplus \mathbb R^n)\otimes E_i=(V\oplus \mathbb R^n)\otimes E_j \eqno(7)\] as $D$-modules.  
Let $k=\dim_\mathbb R Hom_D(\mathbb R,V)=\dim_\mathbb R\hom_D(E_i, V\otimes E_i)$, the multiplicity of the trivial ($1$-dimensional) submodule in $V$ and let $l=
\dim_\mathbb R \hom(E_{i}\otimes E_j, V)=\dim_\mathbb R\hom_D(E_i,V\otimes E_j)$.  
As has been observed already, multiplicity of {\it any} one-dimensional representation occurring in $V$ is at most $1$ and so, in particular, 
$k,l\le 1$.  

Suppose that $i\ne j$, so that $E_i, E_j\subset V$ are not isomorphic.  
Comparing the multiplicities of  
of $E_i$ on both sides of the above isomorphism we get $k+n=l$.  
This is a contradiction since $n\ge 2$ and $l\le 1.$ 
So we must have $i=j$. 

Since $(i,p)\ne (j,q)=(i,q),$  
we have $p\ne q$. 
Thus, fixing $i=1$,  we obtain a fixed point free bijection $p\leftrightarrow q$ of $X_\mathbb R^G$ where $q$ is such that 
$(1,p)\leftrightarrow (1,q)$.  
Now restricting the $\mathcal G$-isomorphism to $G$, we see that $T_pX_\mathbb R\cong T_q X_\mathbb R$.  
It follows that $[X_\mathbb R,G]=0$ and so $[X,G]=0$ by Lemma \ref{square}.  \hfill $\Box$

\begin{remark}\label{vanish} {\em 
(i) Suppose that there exists an involution $\theta:P(m,X)\to P(m,X)$ which commutes with each $\gamma\in \mathcal G$.
Let $\wt{\mathcal G}$ be the group $\mathcal G\cup \{\theta\circ \gamma\mid \gamma\in \mathcal G\}.$  If $P(m,X)^{\wt{\mathcal G}}=\emptyset$, 
then $[P(m,X),\wt{\mathcal G}]=0$ in $Z_*(\wt{\mathcal G})$ and so $[P(m,X),\mathcal G]=0$ in $Z_*(G)$.  

(ii) More generally, if $H\cong \mathbb Z_2^p$ acts smoothly on $P(m,X)$ such that $\theta\circ \gamma=\gamma\circ \theta$ 
for all $\theta\in H,\gamma\in \mathcal G$.  Then $\wt{\mathcal G}=H\times\mathcal G$ acts on $P(m,X)$.  If $P(m,X)^{\wt {\mathcal G}}
=\emptyset$, then $[P(m,X),\mathcal G]=0$ in $Z_*(\mathcal G)$ since $[P(m,X);\wt{\mathcal G}]=0$. 

(iii) Let $V$ be an $(m+1)$-dimensional $D$-representation where the multiplicity of each one-dimensional representation 
occurring in $V$ equals $1$ so that the induced $D$-action on $\mathbb RP^m$ has only finitely many stationary points.  
Using the observation that $T_{[e_i]} \mathbb RP^m\oplus \mathbb R\cong V\otimes E_i$ as in the course of the above proof 
we see that, given distinct positive integers $i,j\le m+1$ the $D$-representations  $T_{[e_i]}\mathbb RP^m, T_{[e_j]}\mathbb RP^m$ are isomorphic if and only if $V\otimes E_j\cong V\otimes E_i$, 
if and only if, for each $k\le m+1,$ there exists an $l\le m+1$ such that $E_k\otimes E_ i\cong E_l\otimes E_j$;
equivalently $\chi_k\otimes \chi_i=\chi_l\otimes \chi_j$. Therefore, $[\mathbb RP^m,D]\ne 0$ if and 
only if, for some $i,k \le m+1, i\ne k,$ we have $\chi_k\otimes \chi_i\ne \chi_l\otimes \chi_j$ for all  $l$ and any $j\le m+1, j\notin\{i,k\}$.

}
\end{remark}

\section{Group actions on $P(m;n_1,\ldots,n_r)$ with finite stationary point sets}

Let $G=\mathbb Z_2^q$ with standard $\mathbb Z_2$-basis $t_j, 1\le j\le q$ and $y_j,1\le j\le q, $ denote the basis for the dual $\hat G=\textrm{Hom}
(G,\mathbb C^\times)\cong \mathbb Z^q_2$.  We shall use multiplicative notation for group operation in $G$ and $\hat G$.   The elements of $G$ and 
$\hat G$ are labeled by subsets of $[q]:=\{1,\ldots, q\}$ where $t_\alpha=\prod_{j\in \alpha}t_j, y_\alpha(t_j)=-1$ if and only if $j\in \alpha$, 
$\alpha\subset [ q]$.
It follows that $y_\alpha(t_\beta)=-1$ if and only if the cardinality $\#(\alpha\cap \beta)$ is odd.

 Let  $V_\mathbb R=\mathbb RG$ denote the regular representation of $G$ and $V=V_\mathbb R\otimes \mathbb C$ its complexification.  Then $V\cong \mathbb C^{2^q}$ decomposes into $1$-dimensional 
complex representations as follows: $V =\oplus_{y\in\hat G} \mathbb C_y$; the $G$-action on $\mathbb C_y$ is via the 
character $y$, that is, $t.z=y(t)z ~\forall t\in G$.  

Let $n\le 2^q$ and let $n_1,\ldots, n_r$ be positive integers such that $n=\sum_{1\le i\le r} n_i.$    
Let $U\subset V$ be an $n$-dimensional complex $G$-submodule. 
The $G$-action on $U\cong \mathbb C^{n}$ yields an action $\phi$ on the complex flag manifold $X:=
\mathbb CG(n_1,\ldots,n_r)\cong U(n)/(U(n_1)\times \cdots\times U(n_r))$ which is identified with the space of flags 
${\bf L}:=(L_1,\cdots, L_r)$ where the $L_j$ are $\mathbb C$-vector subspaces of $U$ such that $\dim L_j=n_j, 1\le j\le r,$ and $ L_i \perp L_j $ if $i\ne j$.   

The tangent bundle 
of $ X$ has the following description as a complex vector bundle: Let $\xi_j$ denote the complex vector bundle 
over $X$ whose fibre over a flag ${\bf L}\in X$ is the vector space $L_j, 1\le j\le r$.  We shall denote by $\bar{\xi}_j$ complex conjugate of $\xi_j$.  The 
fibre of $\bar{\xi}_j$ over $\mathbf L$ is the vector space $\bar{L}_j\subset \mathbb C^n$.  Then, by \cite{lam},
\[\tau X=\bigoplus_{1\le i<j\le r}\bar{\xi}_i\otimes_\mathbb C\xi_j. \eqno(8)\]

It is easily seen that there 
are only finitely many stationary points for this $G$-action using the fact that multiplicity of any irreducible complex representation 
occurring in $U$ is at most $1$.  In fact the stationary points are precisely the flags ${\bf L}$ in which each $L_j$ is a $G$-submodule of $U$.  
Cf. \cite[Lemma 2.1]{gp}.   If $S\subset \hat G$ is the set of all characters $y$ such that 
$\mathbb C_y$ occurs in $U$, we shall denote this action on $\mathbb CG(n_1,\ldots,n_r)$ by $\phi_S$.  

Let $U_\mathbb R=U\cap V_\mathbb R\cong \mathbb R^n$.  
The action of $G$ on $U$ restricts to an action on $U_\mathbb R$ and so we obtain a $G$-action on the real flag manifold $\mathbb RG(n_1,\ldots, n_r)=O(n)/(O(n_1)\times \cdots\times O(n_r))$, 
which we denote by $\phi_S^\mathbb R$.  

Since the $G$ action on $V$ commutes complex conjugation, denoted $\sigma$  (defined as $\sum z_\alpha t_\alpha\mapsto \sum \bar z_\alpha t_\alpha$),  it follows that  
the $G$-action on $\mathbb CG(n_1,\ldots, n_r)$ commutes with the complex conjugation on it, again denote $\sigma$.   Therefore $G$ acts on $\textrm{Fix}(\sigma)=X_\mathbb R$.  
Note that $X_\mathbb R$ is naturally identified with 
the real flag manifold $\mathbb RG(n_1,\ldots, n_r)$.   The identification is obtained as $\textrm {Fix}(\sigma)\ni 
(L_1,\ldots, L_r)\mapsto (L_1\cap U_\mathbb R,\ldots, L_r\cap U_\mathbb R)\in \mathbb RG(n_1,\ldots, n_r)$.   Under this identification,  
the restricted action of $\phi_S$ on $\textrm{Fix}(\sigma)$ corresponds to $\phi_S^\mathbb R$.   The stationary points 
for the $G$-action on $\mathbb CG(n_1,\ldots, n_r)$ are all contained in $\mathbb RG(n_1,\ldots, n_r)$.  Indeed, a flag $(L_1,\ldots,L_r)\in 
\mathbb CG(n_1,\ldots,n_r)$ is fixed by every element of $G$ if and only if each $L_j\cap U_\mathbb R$ is a $G$-submodule of $U_\mathbb R$. 
We shall henceforth identify $X_\mathbb R$ with the real flag manifold. 

In view of Lemma 3.1 and Theorem \ref{main} we see that $[P(m;n_1,\ldots, n_r),\mathcal G]$ is zero if and only if $[X_\mathbb R,\phi_S^\mathbb R]=0$ (where $X_\mathbb R=\mathbb RG(n_1,\ldots, n_r)$).
We shall now give examples of $(X_\mathbb R,\phi^\mathbb R_S)$ that are equivariantly null-cobordant.

\begin{example}{\em 
(i). If the Euler characteristic $\chi(X_\mathbb R)$ is odd, then $[X_\mathbb R]\ne 0$ in $\mathfrak N_*$, which implies that $[X_\mathbb R,\phi^\mathbb R_S]\ne 0$ 
for any $S$.  As is well-known $\chi(X_\mathbb R)=n!/(n_1!\cdots n_r!)$. It is a classical result that the multinomial coefficient $n!/(n_1!\cdots n_r!)$ is odd if and only if $\alpha(n)=\sum_{1\le j\le r} \alpha(n_j)$ where $\alpha(k)$ is the number of $1$s in the 
dyadic digits of $k$. 

(ii). Suppose that $n_i=n_j$ for some $i< j\le r$.  Then we have a fixed point free $\mathbb Z_2$-action $\theta$ given by the involution $\theta :X_\mathbb R\to X_\mathbb R$ that interchanges 
the $i$-th and the $j$-th components of each flag in $X_\mathbb R$.   It is trivial to verify that 
$\theta$ commutes with the $G$-action $\phi_S$ on $X_\mathbb R$.   This leads to a fixed point free action of $\wt G:=G\times \mathbb Z/2\mathbb Z$ action on $X_\mathbb R$. So $[X_\mathbb R,
\wt{G}]=0$.  This implies that $[X_\mathbb R,G]=0$.

(iii) Let $n=2^q-2$.    Let $S\subset \hat G$ be any subset which does not contain the trivial representation and $\# S=n$.  It was shown in \cite[Example 2.3]{gp} that if $k<n$ odd, then 
$[\mathbb RG(k,n-k),\phi_S]=0$.  We consider here the 
more general case of a flag manifold $X_\mathbb R=\mathbb RG(n_1,\ldots, n_r), r\ge 3,$ where we assume that $n_1$ is odd.   We claim that, as in the case of the Grassmannian, 
$[X_\mathbb R,\phi_S]=0$. Although this is a routine generalisation of the case of Grassmann manifolds, we give most of the details.\\
%%%%%%%%%
{\it Proof of Claim:} In view of our hypotheses, $S$ leaves out exactly one nonempty subset $[ q]$ which we shall denote by $\gamma$.  
As in \cite{gp}, we shall denote the symmetric difference of two sets $\alpha, \beta$ by $\alpha+\beta$ and exploit the Boolean algebra structure on the power set of $[q]$.  
Note that 
$\alpha\ne \alpha+\gamma$ for any $\alpha$ since $\gamma\ne \emptyset$.   Also $\alpha\mapsto \alpha+\gamma$ is an involutive  bijection $S\to S$ since $\emptyset,\gamma$ are not in $S$.  
If $A\subset S$ then we shall denote by $A^\gamma$ its image $ \{\alpha+\gamma\mid \alpha\in A\}\subset S$ under this bijection.  We see that, if $\#A$ is odd, 
then $A\ne A^\gamma$. 
If $E\subset U_\mathbb R$ is spanned by standard basis vectors $e_{\alpha_1},\ldots e_{\alpha_k}$, 
we shall denote by $E^\gamma$ the span of $e_{\gamma+\alpha_j},1\le j\le k$.  Note that $E^\gamma\ne E$ if $k=\dim E$ is odd.  
Suppose that $\mathbf{E}=(E_1,\cdots, E_r)\in X_\mathbb R$ is a stationary  point of $\phi_S^\mathbb R$.  Then so is $\mathbf{E}^\gamma:=
(E^\gamma_1,\ldots,E^\gamma_r)$.  Moreover, since $n_1=\dim E_1$ is odd, we see that $\mathbf E^\gamma\ne \mathbf E$.  
So $\mathbf E\mapsto \mathbf{E}^\gamma$ is a fixed point free involution on the set of stationary points of $X_\mathbb R$.  
We shall show that $T_{\bf E} X_\mathbb R\cong T_{{\bf E}^\gamma}X_\mathbb R$ as $G$-representations. The claim follows from this since $\eta_*:Z_*(G)\to R(G)$ is a monomorphism. 
As in the case of complex flag manifolds, the tangent bundle $\tau X_\mathbb R$ has the following description, due to Lam \cite{lam}. 
\[ \tau X_\mathbb R\cong \bigoplus_{1\le i<j\le r}\xi^\mathbb R_i\otimes_\mathbb R \xi^\mathbb R_j \eqno(9) \]
where $\xi^\mathbb R_i$ is the canonical $n_i$-plane bundle over $X_\mathbb R$ whose fibre over $\mathbf L=(L_1,\ldots, L_r)$ is the real vector space $L_i$.  Thus 
$T_{\mathbf E}X_\mathbb R=\oplus_{1\le i<j\le r}E_i\otimes E_j$.   Let $\alpha(i) \subset S$ be defined by the requirement that $E_i$ is the span of $e_p, p\in \alpha(i), 1\le i\le r$.  
Then $E_i\otimes E_j$ is isomorphic, as a $G$-representation, to $\oplus_{p\in \alpha(i)+\alpha(j)}E_{p}$.  Therefore, using $\alpha(i)+\gamma+\alpha(j)+\gamma=\alpha_i+\alpha_j$,  
$E_i\otimes E_j\cong E^\gamma_i\otimes E_j^\gamma$ as $G$-representations 
for $1\le i<j\le r$. It follows that $T_{\mathbf E}X_\mathbb R\cong T_{\mathbf E^\gamma} X_\mathbb R$ as $G$-representations.
This completes the proof.
}
\end{example}

\end{document}